\newtheorem{theorem}{Theorem}[section]
\newtheorem{lemma}[theorem]{Lemma}
\newtheorem{corollary}[theorem]{Corollary}
\theoremstyle{definition}
\theoremstyle{remark}
\newtheorem{remark}[theorem]{Remark}
\newcommand{\one}{\mathbf{1}}
\renewcommand{\leq}{\leqslant}
\renewcommand{\geq}{\geqslant}
	\def\E{\mathbb{E}}
	\def\PP{\mathbb{P}}
\begin{document}

\title{A Short Note on the Average Maximal Number of Balls in a Bin}
\author{Marcus Michelen}
\address{Dept. of Mathematics, University of Pennsylvania, 
			209 South 33rd Street, Philadelphia, PA 19104. }
\email{marcusmi@sas.upenn.edu}
\maketitle

\begin{abstract}
We analyze the asymptotic behavior of the average maximal number of balls in a bin obtained by throwing uniformly at random $r$ balls without replacement into $n$ bins, $T$ times.  Writing the expected maximum as $\frac{r}{n}T+ C_{n,r}\sqrt{T} + o(\sqrt{T})$, a recent preprint of Behrouzi-Far and Zeilberger asks for an explicit expression for $C_{n,r}$ in terms of $n,r$ and $\pi$.  In this short note, we find an expression for $C_{n,r}$ in terms of $n, r$ and the expected maximum of $n$ independent standard Gaussians.  This provides asymptotics for large $n$ as well as closed forms for small $n$---e.g. $C_{4,2} = \frac{3}{2 \pi^{3/2}} \arccos(-1/3)$---and shows that computing a closed form for $C_{n,r}$ is precisely as hard as the difficult question of finding the expected maximum of $n$ independent standard Gaussians.   
\end{abstract}
\maketitle

\section{Introduction}
Suppose that you have $n$ bins, and in each round, you throw $r$ balls such that each ball lands in a
different bin, with each of the $\binom{n}{r}$
possibilities equally likely.  After $T$ rounds, set $U(n,r ; T)$ to be the maximum occupancy among the $n$ bins.  Set $A(n,r; T) = \E U(n,r; T) - \frac{r}{n} T$, and suppose that $A(n,r;T) =  C_{n,r}\sqrt{T} + o(\sqrt{T})$.  A recent preprint \cite{BFZ} of Behrouzi-Far and Zeilberger asks for an explicit expression for $C_{n,r}$ in terms of $n,r$ and $\pi$; they also calculate estimates for $C_{2,1},C_{3,1},C_{4,1},C_{4,2}$ using recurrence relations derived with computer aid.  As a motivation, Behrouzi-Far and Zeilberger \cite{BFZ} note that this problem arises in computer systems since load distribution across servers can be modeled with balls and bins.

Rather than utilizing exact computation in the vein of \cite{BFZ}, we use a multivariate central limit theorem to prove the following:  

\begin{theorem}\label{th:main}
	$$C_{n,r} := \lim_{T \to \infty}\frac{ A(n,r ; T)}{\sqrt{T}} = \sqrt{\frac{r(n-r)}{n(n-1)}} \E\left[ \max_{1 \leq j \leq n} Z_j \right]$$
	where $Z_j$ are i.i.d. standard Gaussians.  
\end{theorem}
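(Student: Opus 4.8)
The plan is to represent each bin's occupancy as a sum of i.i.d.\ contributions over the $T$ rounds, apply a multivariate central limit theorem to the resulting vector of occupancies, and then read off the constant $C_{n,r}$ by identifying the limiting Gaussian vector explicitly.

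First I would fix $n,r$ and let $\mathbf{X}^{(t)} = (X_1^{(t)}, \dots, X_n^{(t)}) \in \{0,1\}^n$ record which bins receive a ball in round $t$, so that $\mathbf{X}^{(t)}$ is uniform over the $0/1$ vectors with exactly $r$ ones and the rounds are independent. Writing $N_i(T) = \sum_{t=1}^T X_i^{(t)}$ for the occupancy of bin $i$, each coordinate has mean $\E X_i^{(t)} = r/n$, and a direct count gives the single-round covariances
\begin{equation*}
\mathrm{Var}(X_i^{(t)}) = \frac{r(n-r)}{n^2}, \qquad \mathrm{Cov}(X_i^{(t)}, X_j^{(t)}) = -\frac{r(n-r)}{n^2(n-1)} \quad (i \neq j),
\end{equation*}
where the off-diagonal value follows most cleanly from $\sum_i X_i^{(t)} = r$ being constant, which forces each row of the covariance matrix to sum to zero. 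In particular $\mathrm{Var}(N_i(T)) = T \cdot \frac{r(n-r)}{n^2}$ exactly, a fact I will reuse below.

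Next I would set $Y_i(T) = \frac{N_i(T) - (r/n)T}{\sqrt{T}}$. Since $U(n,r;T) = \frac{r}{n}T + \sqrt{T}\max_i Y_i(T)$, taking expectations gives $A(n,r;T)/\sqrt{T} = \E[\max_i Y_i(T)]$, so the theorem reduces to computing $\lim_T \E[\max_i Y_i(T)]$. By the multivariate central limit theorem applied to the i.i.d.\ mean-zero vectors $\mathbf{X}^{(t)} - \E\mathbf{X}^{(t)}$, the vector $(Y_1(T), \dots, Y_n(T))$ converges in distribution to a centered Gaussian $\mathbf{G} = (G_1, \dots, G_n)$ whose covariance $\Sigma$ has exactly the entries displayed above, and the continuous mapping theorem yields $\max_i Y_i(T) \to \max_i G_i$ in distribution. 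The main technical point is to upgrade this to convergence of expectations, which requires uniform integrability of $\{\max_i Y_i(T)\}_T$; this is where I expect the real work, but it is tame here because $(\max_i Y_i(T))^2 \le \sum_i Y_i(T)^2$ and $\E\big[\sum_i Y_i(T)^2\big] = n \cdot \frac{r(n-r)}{n^2}$ is bounded uniformly in $T$ by the exact variance above, so the family is bounded in $L^2$ and hence uniformly integrable. Therefore $\lim_T \E[\max_i Y_i(T)] = \E[\max_i G_i]$.

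It remains to evaluate $\E[\max_i G_i]$, and here I would identify $\mathbf{G}$ concretely. Let $Z_1, \dots, Z_n$ be i.i.d.\ standard Gaussians with sample mean $\bar{Z} = \frac{1}{n}\sum_j Z_j$, and set $c = \sqrt{\frac{r(n-r)}{n(n-1)}}$. A short covariance check shows that $c(Z_1 - \bar Z, \dots, Z_n - \bar Z)$ is centered Gaussian with variance $c^2 \cdot \frac{n-1}{n} = \frac{r(n-r)}{n^2}$ on the diagonal and $-c^2/n = -\frac{r(n-r)}{n^2(n-1)}$ off the diagonal, matching $\Sigma$ exactly, so $\mathbf{G} \stackrel{d}{=} c(Z_i - \bar Z)_i$. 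Consequently
\begin{equation*}
\E\Big[\max_{1 \le i \le n} G_i\Big] = c\, \E\Big[\max_{1 \le i \le n}(Z_i - \bar Z)\Big] = c\Big(\E\big[\max_{1 \le i \le n} Z_i\big] - \E[\bar Z]\Big) = c\, \E\Big[\max_{1 \le i \le n} Z_i\Big],
\end{equation*}
since a common shift pulls out of the maximum and $\E[\bar Z] = 0$. Combining the last displays gives $C_{n,r} = \sqrt{\frac{r(n-r)}{n(n-1)}}\, \E[\max_{1 \le j \le n} Z_j]$, as claimed.
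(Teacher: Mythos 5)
Your proposal is correct and follows the same overall route as the paper: represent the occupancy vector as an i.i.d.\ sum, apply the multivariate CLT and continuous mapping theorem, upgrade convergence in distribution to convergence of means via uniform integrability, and identify the limiting Gaussian vector as a scaled copy of $(Z_i - \bar Z)_i$ so that the common shift $\bar Z$ drops out of the expected maximum (your realization is the same as the paper's $W_j = Z_j - \frac{1}{n-1}\sum_{i\neq j}Z_i = \frac{n}{n-1}(Z_j - \bar Z)$, just normalized differently). The one genuinely different ingredient is the uniform integrability step: the paper establishes it with a union bound plus Hoeffding's inequality and an integration of the resulting Gaussian tail, whereas you observe that $(\max_i Y_i(T))^2 \le \sum_i Y_i(T)^2$ has expectation exactly $\frac{r(n-r)}{n}$ for every $T$, so the family is bounded in $L^2$ and hence uniformly integrable. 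Your argument is shorter and more elementary, needing only the exact second moment you had already computed; the paper's Hoeffding route gives stronger (sub-Gaussian, uniform in $T$) tail control, which is more than is needed for this theorem but would be the right tool if one wanted quantitative error bounds. Your derivation of the off-diagonal covariance from the constraint $\sum_i X_i^{(t)} = r$ is likewise a slicker version of the paper's direct binomial computation. Both arguments are complete and correct.
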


The expected maximum of $n$ i.i.d. standard Gaussians appears to have no known closed form for general $n$ and in fact the known forms for small $n$ can be quite nasty; for instance, when $n = 5$, the expected value is $\frac{5}{2 \pi^{3/2}} \arccos(-23/27)$.  A short table of computed values is included in Section \ref{sec:numbers}.

From here, we extract asymptotics for $n \to \infty$, uniformly in $r$: \begin{corollary}
	As $n \to \infty$, we have 
	$$C_{n,r} \sim \sqrt{\frac{2r(n-r)\log(n)}{n^2}} $$
	uniformly in $r$.
\end{corollary}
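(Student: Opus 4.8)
The plan is to read off the two factors in Theorem~\ref{th:main} separately, exploiting the fact that the dependence on $r$ lives entirely in a single term that will ultimately cancel. The one genuine ingredient is the classical extreme-value asymptotic for i.i.d.\ standard Gaussians,
$$\E\left[\max_{1\le j\le n} Z_j\right] \sim \sqrt{2\log n} \qquad (n\to\infty),$$
which I would simply cite. For completeness one can recall its proof: the upper bound follows from the exponential-moment estimate $\E\left[\max_j Z_j\right]\le \frac{1}{\beta}\log \E\left[\sum_j e^{\beta Z_j}\right] = \frac{\log n}{\beta}+\frac{\beta}{2}$ optimized at $\beta=\sqrt{2\log n}$, and the matching lower bound comes from a second-moment or direct tail argument.

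Next I would dispense with the algebraic prefactor. Writing
$$\sqrt{\frac{r(n-r)}{n(n-1)}} = \sqrt{\frac{r(n-r)}{n^2}}\cdot\sqrt{\frac{n}{n-1}},$$
the second factor tends to $1$ as $n\to\infty$ and, crucially, carries no $r$-dependence. Substituting this together with the Gaussian asymptotic into Theorem~\ref{th:main} yields
$$C_{n,r} = \sqrt{\frac{r(n-r)}{n^2}}\cdot\sqrt{\frac{n}{n-1}}\cdot \E\left[\max_{1\le j\le n}Z_j\right] \sim \sqrt{\frac{2r(n-r)\log(n)}{n^2}},$$
which is precisely the claimed equivalence.

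The step I expect to look hardest — the uniformity in $r$ — in fact requires no work, and recognizing this is the main point. Forming the ratio of $C_{n,r}$ to the target expression, the entire $r$-dependent factor $\sqrt{r(n-r)}$ cancels, leaving
$$\frac{C_{n,r}}{\sqrt{2r(n-r)\log(n)/n^2}} = \sqrt{\frac{n}{n-1}}\cdot\frac{\E\left[\max_{1\le j\le n}Z_j\right]}{\sqrt{2\log n}},$$
a quantity that does not depend on $r$ at all. Hence, over the nondegenerate range $1\le r\le n-1$, its convergence to $1$ is automatically uniform in $r$; the degenerate endpoint $r=n$ is trivial since both $C_{n,n}$ and the right-hand side vanish. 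Thus the only substantive content reduces to the Gaussian extreme-value asymptotic cited at the outset, and the uniformity is a free consequence of the cancellation.
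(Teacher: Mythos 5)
Your proposal is correct and follows essentially the same route as the paper, whose entire proof is the citation of $\E[\max_{1\le j\le n} Z_j]\sim\sqrt{2\log n}$ applied to Theorem \ref{th:main}. Your additional observation that the $r$-dependent factor cancels exactly in the ratio, making the uniformity in $r$ automatic, is a worthwhile explicit justification of a point the paper leaves implicit.
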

\begin{proof}
	This follows from utilizing $\E[\max_{1 \leq j \leq n} Z_j  ] \sim \sqrt{ 2 \log(n) }$ (see, for instance, \cite[Exercise $3.2.3$]{durrett}).
\end{proof}

The exact form in Theorem \ref{th:main} also picks up a nice combinatorial property: \begin{corollary}
	For each $n$, the sequence $\{C_{n,r}\}_{r=1}^{n-1}$ is $\log$-concave. 
\end{corollary}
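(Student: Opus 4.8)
The plan is to exploit the fact that the $r$-dependence of $C_{n,r}$ is entirely captured by the factor $\sqrt{r(n-r)}$. Indeed, by Theorem~\ref{th:main} we may write $C_{n,r} = K_n \sqrt{r(n-r)}$ where $K_n := \sqrt{\tfrac{1}{n(n-1)}}\,\E\big[\max_{1\le j \le n} Z_j\big]$ is a strictly positive constant not depending on $r$. Since multiplying a positive sequence by a fixed positive constant and taking coordinatewise square roots both preserve log-concavity (under $\log$ these operations become adding a constant and halving, neither of which disturbs the defining midpoint-concavity inequality $a_r^2 \ge a_{r-1}a_{r+1}$), it suffices to show that the integer sequence $\{r(n-r)\}_{r=1}^{n-1}$ is log-concave.

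First I would recall that log-concavity of the positive sequence $\{a_r\}_{r=1}^{n-1}$ means $a_r^2 \ge a_{r-1}a_{r+1}$ for each interior index $2 \le r \le n-2$. Setting $a_r = r(n-r)$, the required inequality reads
$$[r(n-r)]^2 \ge (r-1)\,(n-r+1)\,(r+1)\,(n-r-1).$$

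Then I would simply expand. Grouping the right-hand side as $(r-1)(r+1)\,(n-r+1)(n-r-1) = (r^2-1)\big((n-r)^2-1\big)$ and subtracting it from the left-hand side $r^2(n-r)^2$ gives
$$r^2(n-r)^2 - (r^2-1)\big((n-r)^2-1\big) = r^2 + (n-r)^2 - 1,$$
which, since $1 \le r \le n-1$ forces both $r \ge 1$ and $n-r \ge 1$, is at least $1 > 0$. Hence the inequality holds and the sequence is log-concave.

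There is essentially no genuine obstacle here: the only points requiring care are the bookkeeping that the positive scalar $K_n$ and the square root leave log-concavity intact, and the observation that the defining inequality need only be checked at interior indices $2 \le r \le n-2$, where all four factors on the right-hand side are strictly positive and the two neighbors $a_{r-1}, a_{r+1}$ genuinely belong to the sequence. Once these are in place, the computation collapses to the manifestly positive quantity $r^2 + (n-r)^2 - 1$.
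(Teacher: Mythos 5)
Your proof is correct and is essentially the paper's own argument: both reduce the claim to the inequality $(r^2-1)\bigl((n-r)^2-1\bigr) \le r^2(n-r)^2$ for the sequence $r(n-r)$, with the scalar $K_n$ and the square root dropping out. You merely spell out the bookkeeping (and the explicit difference $r^2+(n-r)^2-1$) that the paper leaves implicit.
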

\begin{proof}
	Log-concavity follows from the inequality $$(r-1)(n-r+1)(r+1)(n-r-1) = (r^2 - 1)((n-r)^2 - 1)\leq r^2 (n-r)^2\,.$$	
\end{proof}

To prove Theorem \ref{th:main}, we use a multivariate central limit theorem to prove a limit theorem for $\frac{U(n,r;T) - \frac{r}{n} T}{\sqrt{T}}$ (Corollary \ref{cor:conv-in-dist}), show that we can exchange the limit and expectation (Lemma \ref{lem:form}), and then relate this expectation to the expected maximum of i.i.d. standard normals (Lemma \ref{lem:comp-Y}). 

\section{Proving Theorem \ref{th:main}}

Set $b(n,r;T)$ to be the random vector in $\{0,1,\ldots,T \}^n$ denoting the occupancies of the bins at time $T$.  The following representation for $b(n,r; T)$ is immediate:

\begin{lemma}\label{lem:b-rep}
	Fix $n,r$ and let $X$ be the random variable in $\{0,1\}^n$ chosen uniformly among vectors $v \in \{0,1\}^n$ with $\|v\|_{L^1} = r$.  Let $X_1,X_2,\ldots$ be i.i.d. copies of $X$.  Then $b(n,r;T) \stackrel{d}{=} \sum_{j = 1}^T X_j$.  Further, the random variable $X$ has covariance matrix $\Gamma$ given by $$\Gamma_{i,j} = \begin{cases}
	\frac{r(n-r)}{n^2} &\text{ for }i = j \\
	-\frac{r(n-r)}{n^2(n-1)} &\text{ for }i \neq j
	\end{cases}\,.$$
\end{lemma}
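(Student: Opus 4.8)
The plan is to prove two separate assertions: first, that the stated distributional identity holds, and second, that $\Gamma$ has the claimed form. The distributional identity is the easy part. At each round, one throws $r$ balls into distinct bins, with each of the $\binom{n}{r}$ configurations equally likely; encoding the occupied bins of a single round as an indicator vector in $\{0,1\}^n$ with exactly $r$ ones, this vector is precisely the uniform random variable $X$ described. Since rounds are independent and the total occupancy is obtained by summing the per-round indicator vectors, we get $b(n,r;T) \stackrel{d}{=} \sum_{j=1}^T X_j$ immediately from the construction, with no real computation needed.

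The substance is the covariance computation. First I would compute $\E[X_i]$: by symmetry each coordinate is equally likely to be $1$, and since exactly $r$ of the $n$ coordinates are $1$, linearity gives $\E[X_i] = r/n$. For the diagonal entries, I would use $X_i^2 = X_i$ (since $X_i \in \{0,1\}$), so $\E[X_i^2] = r/n$ and hence $\mathrm{Var}(X_i) = \frac{r}{n} - \frac{r^2}{n^2} = \frac{r(n-r)}{n^2}$, matching the $i = j$ case. For the off-diagonal entries with $i \neq j$, I would compute $\E[X_i X_j] = \PP(X_i = 1, X_j = 1)$, the probability that two fixed distinct bins are both occupied in a single round. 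This is a direct count: $\binom{n-2}{r-2} / \binom{n}{r} = \frac{r(r-1)}{n(n-1)}$.

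From there the off-diagonal covariance is
$$\Gamma_{i,j} = \E[X_i X_j] - \E[X_i]\E[X_j] = \frac{r(r-1)}{n(n-1)} - \frac{r^2}{n^2}\,.$$
I would then verify algebraically that this simplifies to $-\frac{r(n-r)}{n^2(n-1)}$; placing both terms over the common denominator $n^2(n-1)$ gives numerator $n r(r-1) - r^2(n-1) = r\big(nr - n - rn + r\big) = r(r - n) = -r(n-r)$, which yields the claimed value. As a sanity check consistent with this being a covariance matrix, I would note that each row of $\Gamma$ sums to zero, reflecting the deterministic constraint $\sum_i X_i = r$; indeed $\frac{r(n-r)}{n^2} + (n-1)\cdot\left(-\frac{r(n-r)}{n^2(n-1)}\right) = 0$.

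The main obstacle, such as it is, is purely bookkeeping: correctly counting $\PP(X_i = 1, X_j = 1)$ and then carrying out the fraction simplification without sign errors. There is no conceptual difficulty here, since both the distributional identity and the moment computations follow directly from the uniform sampling description and the Bernoulli identity $X_i^2 = X_i$; the lemma is labeled ``immediate'' precisely because every step reduces to elementary counting and linearity of expectation.
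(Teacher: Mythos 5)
Your proposal is correct and follows the paper's proof exactly: the same symmetry argument gives $\E[X_i] = r/n$, the diagonal entry is $\frac{r}{n}(1-\frac{r}{n})$, and the off-diagonal entry is $\frac{\binom{n-2}{r-2}}{\binom{n}{r}} - \frac{r^2}{n^2}$, which you simplify correctly. The only additions are the explicit algebra and the row-sum sanity check, both of which the paper omits as routine.
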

\begin{proof}
	The covariance matrix $\Gamma$ can be calculated easily: $$\Gamma_{j,j} = \frac{r}{n}\left(1 - \frac{r}{n}\right) = \frac{r(n-r)}{n^2}\,.$$
	
\noindent	For $\Gamma_{i,j}$ with $i \neq j$, we compute $$\Gamma_{i,j} = \frac{\binom{n-2}{r-2}}{\binom{n}{r}} - \frac{r^2}{n^2} = -\frac{r(n-r)}{n^2(n-1)}\,.$$
\end{proof}

From here, the multivariate central limit theorem shows convergence in distribution.  

\begin{corollary}\label{cor:conv-in-dist}
	$$\frac{U(n,r;T) - \frac{r}{n} T }{\sqrt{T}} \xrightarrow{d} \max\{Y_1,\ldots, Y_n \}$$
	
	where $(Y_1,\ldots,Y_n)$ is a mean-zero multivariate Gaussian with covariance matrix $\Gamma$, given in Lemma \ref{lem:b-rep}.  
\end{corollary}
\begin{proof}
	The multivariate central limit theorem \cite[Theorem $3.9.6$]{durrett} implies that $$\frac{b(n,r;T) - \E b(n,r;T)}{\sqrt{T}} \to \mathcal{N}(0,\Gamma)\,.$$
	The identity $\E b(n,r;T) = (\frac{r}{n},\ldots,\frac{r}{n})$ together with the continuous mapping theorem implies the Corollary.	
\end{proof}

To gain information about $A(n,r; T)$, we need to show that not only do we have convergence in distribution, but that we can switch the order of taking limits and expectation.

\begin{lemma}\label{lem:form}
	$$C_{n,r} := \lim_{T \to \infty}\frac{ A(n,r ; T)}{\sqrt{T}} = \E \max\{Y_1,\ldots,Y_n\}  $$
	where $(Y_1,\ldots,Y_n)$ are jointly Gaussian with mean $0$ and covariance matrix given by $\Gamma$ as defined in Lemma \ref{lem:b-rep}.
\end{lemma}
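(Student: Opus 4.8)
The plan is to recognize that dividing by $\sqrt{T}$ turns $A(n,r;T)/\sqrt{T}$ into the expectation of precisely the random variable whose distributional limit was identified in Corollary \ref{cor:conv-in-dist}, and then to upgrade that convergence in distribution to convergence of expectations. Writing $W_T := \frac{U(n,r;T) - \frac{r}{n}T}{\sqrt{T}}$, we have exactly $\frac{A(n,r;T)}{\sqrt{T}} = \E W_T$, while Corollary \ref{cor:conv-in-dist} gives $W_T \xrightarrow{d} \max\{Y_1,\ldots,Y_n\}$. Since convergence in distribution does not by itself force convergence of means, the entire content of the lemma is the justification for interchanging the limit and the expectation.

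First I would reduce the statement to a uniform integrability claim: it is standard (for instance via the Skorokhod representation theorem together with Vitali's convergence theorem) that if $W_T \xrightarrow{d} W$, the family $\{W_T\}_T$ is uniformly integrable, and $\E|W| < \infty$, then $\E W_T \to \E W$. Finiteness of the limiting expectation is immediate, since $\max\{Y_1,\ldots,Y_n\}$ is dominated in absolute value by $\sum_{j=1}^n |Y_j|$, a finite sum of Gaussians. Thus everything reduces to controlling the tails of $W_T$ uniformly in $T$.

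The key estimate—and the main, though ultimately mild, obstacle—is a moment bound uniform in $T$. Writing $Z_i := \frac{b(n,r;T)_i - \frac{r}{n}T}{\sqrt{T}}$ for the centered and rescaled occupancy of bin $i$, one has $W_T = \max_{1 \le i \le n} Z_i$ and hence the pointwise bound $W_T^2 = \left(\max_i Z_i\right)^2 \le \max_i Z_i^2 \le \sum_{i=1}^n Z_i^2$, which holds regardless of the sign of the maximizing coordinate. By Lemma \ref{lem:b-rep} each $Z_i$ is a centered, rescaled sum of i.i.d.\ bounded increments, so $\E Z_i^2 = \Gamma_{i,i} = \frac{r(n-r)}{n^2}$ independently of $T$. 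Summing gives $\sup_T \E W_T^2 \le \frac{r(n-r)}{n} < \infty$.

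Finally, an $L^2$-bounded family is uniformly integrable, so the hypotheses of the interchange criterion are satisfied and $\E W_T \to \E \max\{Y_1,\ldots,Y_n\}$, which is the assertion of the lemma. I expect the only points worth spelling out in the write-up to be the sign-independent inequality $\left(\max_i Z_i\right)^2 \le \sum_i Z_i^2$ and the precise convergence-of-means theorem invoked; the moment computation itself falls out immediately from the covariance structure already established in Lemma \ref{lem:b-rep}.
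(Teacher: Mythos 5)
Your proof is correct, and while it shares the paper's overall strategy---establish uniform integrability of $W_T = (U(n,r;T)-\frac{r}{n}T)/\sqrt{T}$ and then invoke the fact that uniform integrability plus convergence in distribution yields convergence of means---the way you establish uniform integrability is genuinely different. The paper bounds the tail of $W_T$ by a union bound over the $n$ bins followed by Hoeffding's inequality applied to $b^{(1)}$ (a sum of $T$ i.i.d.\ bounded increments), obtaining a sub-Gaussian tail $\PP[|W_T|\ge\lambda]\le 2n e^{-2\lambda^2}$ uniform in $T$ and integrating it. You instead use the sign-independent inequality $(\max_i Z_i)^2\le\max_i Z_i^2\le\sum_i Z_i^2$ together with the variance computation already done in Lemma \ref{lem:b-rep} to get $\sup_T \E W_T^2\le \frac{r(n-r)}{n}$, and then use the standard fact that an $L^2$-bounded family is uniformly integrable. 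Your route is more elementary---it needs nothing beyond the covariance matrix $\Gamma$ and Chebyshev-type reasoning, and avoids importing a concentration inequality---while the paper's Hoeffding bound is stronger than necessary for this lemma but would also give convergence of all moments, not just the first. One cosmetic point: you reuse the symbol $Z_i$ for the centered, rescaled occupancies, which collides with the paper's use of $Z_j$ for i.i.d.\ standard Gaussians; rename these in a final write-up.
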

\begin{proof}
	Our strategy is to show uniform integrability of $\widehat{U}(T) := (U(n,r;T) - \frac{r}{n} T)/\sqrt{T}$; for $j \in \{1,2,\ldots,n\}$, let $b^{(j)}$ denote the number of balls in bin $j$.  Then by a union bound, we have \begin{equation}
	\PP\left[ \left|U(n,r;T) - \frac{r}{n}T \right| \geq \lambda \sqrt{T} \right] \leq n \PP\left[ \left|b^{(1)} - \frac{r}{n}T\right| \geq \lambda \sqrt{T} \right]\,.
	\end{equation}
	
\noindent	By Hoeffding's inequality (e.g. \cite[Theorem 7.2.1]{pm}), we bound $$\PP\left[ \left|b^{(1)} - \frac{r}{n}T\right| \geq \lambda \sqrt{T} \right] \leq 2 \exp\left(-2 \lambda^2  \right)\,.$$
	
\noindent	Thus, for each $T$ and $K > 0$ we have $$\E\left[ |\widehat{U}(T)|\cdot \one_{|\widehat{U}(T)| \geq K  } \right] \leq 2n \int_{K}^\infty e^{-2 \lambda^2}\,d\lambda\,.  $$
	
	This goes to zero uniformly in $T$ as $K \to \infty$, thereby showing that the family $\{\widehat{U}(T) \}_{T \geq 0}$ is uniformly integrable.  Since uniform integrability together with convergence in distribution implies convergence of means, Corollary \ref{cor:conv-in-dist} completes the proof.
\end{proof}

All that remains now is to relate $\E \max \{Y_1,\ldots,Y_n  \}$ to the right-hand-side of Theorem \ref{th:main}.

\begin{lemma}\label{lem:comp-Y}
	Let $(Y_1,\ldots,Y_n)$ be jointly Gaussian with mean $0$ and covariance matrix $\Gamma$.  Then $$\E[ \max \{ Y_1,\ldots,Y_n\}] = \sqrt{\frac{r(n-r)}{n(n-1)}} \E\left[\max_{1 \leq j \leq n}Z_j \right]$$
	where the variables $Z_j$ are i.i.d. standard Gaussians.
\end{lemma}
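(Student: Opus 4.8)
The plan is to exhibit an explicit linear representation of $(Y_1,\ldots,Y_n)$ in terms of i.i.d.\ standard Gaussians that makes the scaling factor transparent. First I would record the singular structure of $\Gamma$: the all-ones vector lies in its kernel, since each row sums to $\frac{r(n-r)}{n^2} - (n-1)\frac{r(n-r)}{n^2(n-1)} = 0$, so in particular $\sum_{j} Y_j = 0$ almost surely. This degeneracy is exactly what one expects from a centered vector, and it suggests comparing $(Y_1,\ldots,Y_n)$ with $(Z_1 - \bar Z,\ldots,Z_n - \bar Z)$, where $Z_1,\ldots,Z_n$ are i.i.d.\ standard Gaussians and $\bar Z := \frac1n\sum_k Z_k$.

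Next I would compute the covariance matrix of $W_j := Z_j - \bar Z$ directly: one finds $\mathrm{Var}(W_j) = 1 - \frac1n = \frac{n-1}{n}$ and $\mathrm{Cov}(W_i,W_j) = -\frac1n$ for $i \neq j$. Setting $\sigma := \sqrt{\frac{r(n-r)}{n(n-1)}}$, I would then verify that $\sigma^2$ times this covariance reproduces $\Gamma$ exactly: the diagonal entries become $\sigma^2\frac{n-1}{n} = \frac{r(n-r)}{n^2}$ and the off-diagonal entries become $-\sigma^2\frac1n = -\frac{r(n-r)}{n^2(n-1)}$. Since $(Y_1,\ldots,Y_n)$ and $\sigma(W_1,\ldots,W_n)$ are both mean-zero jointly Gaussian with identical covariance matrices, they are equal in distribution, and hence so are $\max_j Y_j$ and $\sigma\max_j W_j$.

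This identification reduces the lemma to a one-line computation. Using that subtracting the common constant $\bar Z$ commutes with the maximum, I obtain
$$\E[\max\{Y_1,\ldots,Y_n\}] = \sigma\,\E\!\left[\max_{1\le j \le n}(Z_j - \bar Z)\right] = \sigma\left(\E\!\left[\max_{1 \le j \le n} Z_j\right] - \E[\bar Z]\right),$$
and since $\E[\bar Z] = 0$ the right-hand side collapses to $\sqrt{\frac{r(n-r)}{n(n-1)}}\,\E[\max_{1 \le j \le n} Z_j]$, which is precisely the claim.

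I do not expect a genuine obstacle here: the entire content lies in guessing the correct representation $Y_j \stackrel{d}{=} \sigma(Z_j - \bar Z)$, after which everything is routine. The only points requiring care are the covariance bookkeeping for $W_j$ and the (standard) fact that equality of mean and covariance for jointly Gaussian vectors upgrades to equality in distribution, which is what licenses passing from the covariance identity to the equality of the two maxima.
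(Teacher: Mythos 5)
Your proof is correct and follows essentially the same route as the paper: both arguments realize $(Y_1,\ldots,Y_n)$ as a scalar multiple of a centered Gaussian vector built from i.i.d.\ $Z_j$'s and then pull the common (mean-zero) shift out of the maximum. Your choice $W_j = Z_j - \bar Z$ is just the constant multiple $\frac{n-1}{n}\bigl(Z_j - \frac{1}{n-1}\sum_{i\neq j}Z_i\bigr)$ of the paper's representation, so the two computations are interchangeable.
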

\begin{proof}
	Consider a multivariate Gaussian $(W_1,\ldots,W_n)$ with mean $0$ and covariance matrix given by $$\widetilde{\Gamma}_{i,j} = \begin{cases}
	\frac{n}{n-1} &\text{ for }i = j\\
	-\frac{n}{(n-1)^2} &\text{ for }i \neq j\,.
	\end{cases}$$
	
\noindent	Since $\Gamma = \frac{r(n-r)(n-1)}{n^3} \widetilde{\Gamma}$, we have \begin{equation}\label{eq:Y_n-W_n}
	(Y_1,\ldots,Y_n) \stackrel{d}{=} \sqrt{\frac{r(n-r)(n-1)}{n^3}} (W_1,\ldots,W_n)\,.\end{equation} The vector $(W_1,\ldots,W_n)$ can in fact be realized by setting $W_j = Z_j - \frac{\sum_{i \neq j} Z_i}{n-1}$ with $Z_i$ i.i.d. standard Gaussians.  This is because the two vectors are both mean-zero multivariate Gaussians and have the same covariance matrix.  Setting $S_n = \sum_{i = 1}^n Z_i$, we note $$W_j = -\frac{S_n}{n-1} + \frac{n}{n-1} Z_j$$
	
\noindent	thereby implying $$\max_{1 \leq j \leq n} \{W_j\} = -\frac{S_n}{n-1} + \left(\frac{n}{n-1} \right) \max_{1 \leq j \leq n} \{Z_j\}\,.$$
	
	\noindent Taking expectations and utilizing \eqref{eq:Y_n-W_n} completes the proof. 
\end{proof}

\begin{remark}
	The final piece of the proof of Lemma \ref{lem:comp-Y}---relating the expected maximum of the process $(Z_j - \frac{\sum_{i \neq j} Z_i}{n-1})_{j=1}^n$ to that of $(Z_j)_{j=1}^n$---is due to a Math Overflow answer of Iosef Pinelis \cite{pinelis}.
\end{remark}

\noindent \emph{Proof of Theorem \ref{th:main}:}  The theorem follows by combining Lemmas \ref{lem:form} and \ref{lem:comp-Y}. \qedhere

\section{Comparison with Numerics}  \label{sec:numbers}

Theorem \ref{th:main} proves an equality for $C_{n,r}$, although for large $n$, the expectation on the right-hand-side of Theorem \ref{th:main} appears to have no known closed form.  Calculating these values for small $n$ is tricky and tedious; we reproduce a few values of $\E[\max_{1 \leq j \leq n} Z_j]$ which can be computed precisely, as calculated in \cite{table-of-values}:\\
\begin{center}
\begin{tabular}{ r | l  }

	$n$ & $\E[\max_{1 \leq j \leq n} Z_j]$ \\ \hline
	2 & $\pi^{-1/2}$ \\ \hline
	3 & $(3/2) \pi^{-1/2}$ \\ \hline
	4 & $3 \pi^{-3/2} \arccos(-1/3)$ \\ \hline
	5 & $(5/2) \pi^{-3/2} \arccos(-23/27)$ \\
	
\end{tabular}
\end{center}
\medskip
We can then use these to obtain exact values for the values of $C_{n,r}$ predicted in \cite{BFZ}, and note that their predictions are quite close:
\medskip
\begin{center}
	\begin{tabular}{ c | c | c | c  }
		
		 & Exact Value & Numerical Approximation & Predicted Value from \cite{BFZ}  \\ \hline 
		$C_{2,1}$ & $\frac{1}{\sqrt{2\pi}}$  & $0.39894\ldots$ & $0.3989\ldots$\\ \hline
		$C_{3,1}$ & $\frac{\sqrt{3}}{2 \sqrt{\pi}}$  & $0.48860\ldots$ & $0.489\ldots$\\ \hline
		$C_{4,1}$ & $\frac{3}{2 \pi^{3/2}} \arccos(-1/3)$ & $0.51469\ldots$ & $0.516\ldots$ \\ \hline
		$C_{4,2}$ & $\frac{\sqrt{3}}{\pi^{3/2}}\arccos(-1/3)$ & $0.59431\ldots$ & $0.59430\ldots$
		
	\end{tabular}
\end{center}

\bibliographystyle{abbrv}
\bibliography{Bib}

\end{document}